\tikzstyle arrowstyle=[scale=1.5]
\tikzstyle directed=[postaction={decorate,decoration={markings, mark=at position .9 with {\arrow[arrowstyle]{stealth}}}}]
\tikzstyle reverse directed=[postaction={decorate,decoration={markings, mark=at position .9 with {\arrowreversed[arrowstyle]{stealth};}}}]
\newtheorem{Theorem}{Theorem}[section]
\newtheorem{Corollary}{Corollary}[section]
\newtheorem{Conjecture}{Conjecture}[section]
\newtheorem{Lemma}{Lemma}[section]
\newtheorem{Example}{Example}[section]
\def\emptyset{\mbox{{\rm \O}}}
\newenvironment{proof}{
%\parskip
\noindent {\bf Proof.}\rm}%
{\mbox{}\hfill\rule{0.5em}{0.809em}\par}
\tikzstyle{vertex}=[circle, draw, inner sep=0pt, minimum size=6pt]
\begin{document}
\title{\LARGE{\textbf{A new condition on dominated pair degree sum for a digraph to be supereulerian\thanks{This work was supported by the
Natural Science Foundation of Xinjiang (No. 2020D04046) and the
National Natural Science
Foundation of China (No. 12261016, No. 11901498, No. 12261085).}}}}
\author{{Changchang Dong$^{a,b}$, Jixiang Meng$^{b}$\footnote{Corresponding
author.
E-mail:  mjxxju@sina.com}, Juan Liu$^{c}$}
\\
\small  a. Department of Mathematical Sciences, Tsinghua University, Beijing 100084, China  \\
\small  b. College of Mathematics and System Sciences, Xinjiang University, Urumqi 830046, China  \\
\small  c. College of Big Data Statistics, Guizhou University of Finance and Economics, Guiyang 550025, China  \\}
\date{}
%\date{ }
\maketitle
\noindent {\small {\bf Abstract}\\
A digraph $D$ is supereulerian if $D$ contains a spanning eulerian subdigraph.
 For any two vertices $u,v$ in a digraph $D$,
if $(u,w),(v,w)\in A(D)$ for some $w\in V(D)$, then we call the pair $\{u, v\}$ dominating;
if $(w,u),(w,v)\in A(D)$ for some $w\in V(D)$, then we call the pair $\{u, v\}$ dominated.
In 2015, Bang-Jensen and Maddaloni [Journal of graph theory, 79(1) (2015) 8-20]
 proved that if a strong digraph $D$ with $n$ vertices satisfies
 $d(u) + d(v)\geq 2n -3$ for any pair of nonadjacent vertices $\{u,v\}$ of $D$, then $D$ is supereulerian.
In this paper, we study the above degree sum condition for any pair of dominated or dominating nonadjacent vertices
of supereulerian digraphs.
}
\\

\noindent {\small {\bf Keywords} Supereulerian digraph,
spanning eulerian subdigraph, degree condition, dominated vertex pair}\\

\noindent {\small {\bf AMS subject classification 2010} 05C07, 05C20, 05C45.}

\section{Introduction}

Digraphs
considered are loopless and without parallel arcs.
We refer the reader to
\cite{BaGu09} for digraphs for undefined terms and notation.
In this paper, we
define $[k]=\{1,2,\ldots ,k\}$ for an integer $k>0$ and
 use $(w,z)$ to denote an arc oriented from a vertex
$w$ to a vertex $z$ and say that $w$ dominates $z$.
For any two vertices $u,v$ in a digraph $D$,
if $(u,w),(v,w)\in A(D)$ for some $w\in V(D)$, then we call that
$\{u, v\}$ dominates $w$ or call the pair $\{u, v\}$ \emph{dominating}; if $(w,u),(w,v)\in A(D)$ for some $w\in V(D)$, then we call that
$\{u, v\}$ is dominated by $w$ or call the pair $\{u, v\}$ \emph{dominated}.
We often write \emph{dipaths} for directed paths, \emph{dicycles} for directed cycles and \emph{ditrails} for directed trails in digraphs.
The \emph{length} of a ditrail is the number of its arcs.
If a ditrail $T$ starts
at $w$ and ends at $z$, we may call it $(w,z)$-ditrail $T$ or $T_{[w,z]}$ and say
$w$
is the \emph{initial vertex} of $T$ and $z$ is the \emph{terminal vertex} of $T$.
%$z$ and $w$ are \emph{end vertices} of $S$.
A $(w,z)$-ditrail of minimum length in $D$ is called a shortest $(w,z)$-ditrail in $D$.
A digraph $D$ is \emph{strong} if any vertex of a digraph $D$ is reachable from all other vertex of $D$.
We often write \emph{$|D|$} for $|V(D)|$
and use $K_n^*$ to represent the \emph{complete digraph} with $n$ vertices.
For a graph $G$, a digraph $D$ is called a biorientation of $G$ if $D$ is obtained from $G$ by replacing each edge $xy$ of $G$ by either $(x,y)$ or $(y,x)$ or the pair $(x,y)$ and $(y,x)$. Recall that a
\emph{semicomplete multipartite digraph}
is a biorientation of a complete multipartite graph.

Let $T=v_{1} v_{2} \cdots v_{k}$ denote a ditrail.
For any $1 \leq i \leq j \leq k$, we use $T_{[v_{i}, v_{j}]}=v_{i} v_{i+1} \cdots v_{j-1} v_{j}$ to denote the \emph{sub-ditrail} of $T$. Likewise, if $Q=u_{1} u_{2} \cdots u_{k} u_{1}$ is a closed ditrail, then for any $i, j$ with $1 \leq i<j \leq k, Q_{[u_{i}, u_{j}]}$ denotes the sub-ditrail $u_{i} u_{i+1} \cdots u_{j-1} u_{j}$. If $T^{\prime}=w_{1} w_{2} \cdots w_{k^{\prime}}$ is a ditrail with $v_{k}=w_{1}$ and $V(T) \cap V\left(T^{\prime}\right)=\left\{v_{k}\right\}$, then we use $T T^{\prime}$ or $T_{[v_{1}, v_{k}]} T^{\prime}_{[v_{k}, w_{k^{\prime}}]}$ to denote the ditrail $v_{1} v_{2} \cdots v_{k} w_{2} \cdots w_{k^{\prime}}$. If $V(T) \cap V\left(T^{\prime}\right)=\emptyset$ and there is a dipath $z_{1} z_{2} \cdots z_{t}$ with
$z_{2}, \cdots, z_{t-1} \notin V(T) \cup V\left(T^{\prime}\right)$ and with $z_{1}=v_{k}$ and $z_{t}=w_{1}$, then we use $Tz_{1} \cdots z_{t} T^{\prime}$ to denote the ditrail $v_{1} v_{2} \cdots v_{k} z_{2} \cdots z_{t} w_{2} \cdots w_{k^{\prime}}$. In particular, if $T$ is a $(v, w)$-ditrail of a digraph $D$ and $(u ,v), (w, z) \in A(D)-A(T)$, then we use $u v T w z$ to denote the $(u, z)$-ditrail $D\langle A(T) \cup\{(u ,v), (w, z)\}\rangle$. The subdigraphs $u v T$ and $T w z$ are similarly defined.

For a digraph $D$, $a\in A(D)$ and a subdigraph $S$ of $D$,
we use $D-S$ to denote the subdigraph $D\langle V(D)-V(S)\rangle$,
use $D-a$ to denote the subdigraph $D\langle A(D)-a\rangle$,
and use $D+a$ to denote the subdigraph $D\langle A(D)+a\rangle$.
Let $D_{1}$ and $D_{2}$ be two digraphs, the \emph{union} $D_{1} \cup D_{2}$ of $D_{1}$ and $D_{2}$ is a digraph with vertex set $V\left(D_{1} \cup D_{2}\right)=V\left(D_{1}\right) \cup V\left(D_{2}\right)$ and arc set $A\left(D_{1} \cup D_{2}\right)=A\left(D_{1}\right) \cup A\left(D_{2}\right)$.
For $S,T\subseteq V(D)$, an ($s,t$)-dipath $P$ is an $(S,T)$\emph{-dipath} if $s\in S$, $t\in T$ and $V(P)\cap (S\cup T)=\{s,t\}$.
Note that if $S\cap T\not =\emptyset$, then a vertex $s\in S\cap T$ forms an ($S,T$)-dipath by itself.
When $S$ and $T$ are subdigraphs of $D$, we also talk about an ($S,T$)-dipath.

Let $d_D^-(s),d_D^+(s),d_D(s) = d_D^-(s)+d_D^+(s), N_D^-(s)$ and $N_D^+(s)$ denote, respectively, the \emph{in-degree, out-degree, degree, in-neighbourhood} and
 \emph{out-neighbourhood} of a vertex $s\in V(D)$.

In \cite{BoST77}, Boesch et al. raised the
supereulerian problem, which strives to describe graphs
that contain spanning eulerian subgraphs.
In \cite{Pull79}, Pulleyblank
showed that determining whether a graph is supereulerian, even within planar graphs,
is NP-complete. There
have been many studies on this topic, as revealed in
the surveys  \cite{Catl92, ChenLai95} and \cite{LaSY13}.

It is natural
to try to relate supereulerian graphs to supereulerian digraphs.
 A digraph $D$ is \emph{supereulerian} if it contains a closed ditrail $S$ with $V(S) = V(D)$, i.e.,
it has a spanning eulerian subdigraph,
and \emph{nonsupereulerian} otherwise.
Results on supereulerian digraphs can be found in
\cite{AlLa15, Al22, BaMa14, Dong21, HLL14, HLL16}, among others.
It is worth pointing out that only a few of degree sum conditions are
studied to ensure supereulerianicity in digraphs.
In particular, the following have been proved.

\begin{Theorem}\label{BaMa14 2n-3} (\cite{BaMa14})
Let $D$ be a strong digraph with $n$ vertices.
If for any pair of nonadjacent
vertices $u$ and $v$,
$d(u)+d(v)\geq 2n-3$, then $D$ is supereulerian.
\end{Theorem}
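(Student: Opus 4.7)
The plan is to argue by contradiction: suppose $D$ is strong with $d(u)+d(v)\geq 2n-3$ for every nonadjacent pair, yet $D$ has no spanning closed ditrail. Let $T$ be a closed ditrail of $D$ with $|A(T)|$ maximum; such a $T$ exists because $D$ is strong and hence contains a dicycle. Put $S=V(T)$ and $W=V(D)\setminus S$, and assume $W\neq\emptyset$. Using strong connectivity, choose a shortest dipath $P=s\,u_1u_2\cdots u_k\,s'$ with $s,s'\in S$, $k\geq 1$, and $u_1,\ldots,u_k\in W$. The shortest-path choice restricts the arcs incident to the $u_i$: no arc from $u_i$ can land in $S$ on a vertex that would shortcut $P$, and no chord inside $\{u_1,\ldots,u_k\}$ can shorten $P$. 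If $s=s'$ then $P$ is a closed dipath through $W$ which can be inserted as a detour into $T$ at $s$, yielding a strictly longer closed ditrail and contradicting the maximality of $T$; so I may assume $s\neq s'$.

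In this harder case, the arc set $A(T)\cup A(P)$ is connected and its in-/out-degrees are balanced at every vertex except $s$, where the out-degree exceeds the in-degree by one, and $s'$, where the in-degree exceeds the out-degree by one. Hence any $(s',s)$-dipath $Q$ in $D$ whose arcs lie outside $A(T)\cup A(P)$ would give an eulerian subdigraph $T\cup P\cup Q$ with strictly more arcs than $T$, again contradicting the maximality of $T$. The crux of the proof is therefore to show that such a $Q$ must exist. If it does not, then essentially every out-arc of $s'$ and every in-arc of $s$ in $D$ is forced to lie in $A(T)\cup A(P)$, which through a cut-theoretic argument pins down a vertex $v\in V(T)$ lying strictly between $s$ and $s'$ along $T$ and an index $i$ with $u_i$ and $v$ nonadjacent. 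For this pair one has a clean double upper bound: $d(u_i)$ is controlled by the shortness of $P$ (its neighbours in $S$ are essentially $s$ and $s'$, and in $W$ only the other $u_j$), while $d(v)$ is controlled by the maximality of $T$ (any additional arc at $v$ would combine with the would-be $Q$ to produce a longer closed ditrail). Summing these bounds and comparing against $2n-3$ produces the sought contradiction.

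The principal obstacle I anticipate is the last step: selecting $\{u_i,v\}$ so that both bounds are simultaneously sharp, and dealing separately with the small or degenerate configurations --- $|W|=1$, $k=1$, or $T$ visiting $s$ or $s'$ only once --- which typically lose an arc or two in the count and must be verified by direct inspection. The threshold $2n-3$ should turn out to be exactly what the accounting permits; any smaller constant would leave room for the extremal examples identified in \cite{BaMa14}.
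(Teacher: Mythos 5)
This theorem is quoted by the paper from \cite{BaMa14} and is not proved there; the fair comparison is against the template the paper uses to prove its own Theorems \ref{smd 2n-3}--\ref{5/2-11/2}, which is essentially the Bang-Jensen--Maddaloni argument. Your skeleton matches that template: take a maximum closed ditrail, use strong connectivity to get an external dipath between two of its vertices, and derive a contradiction from the degree-sum hypothesis applied to a nonadjacent pair consisting of one external vertex and one vertex of the closed ditrail. However, the step you yourself flag as the ``principal obstacle'' is not merely unfinished --- the mechanism you propose for it is wrong, and the tool that actually makes it work is absent.

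Concretely: (1) Your minimality criterion is off. You choose the \emph{external} dipath $P$ shortest; the working argument instead chooses the external path $T$ so that the sub-ditrail of the closed ditrail $S$ joining $T$'s endpoints (call it $P$, with internal vertices $W$) is shortest \emph{inside} $S$. It is this choice, combined with maximality of $S$, that forces each external vertex $x_i$ to be nonadjacent to every internal vertex $y_j$ of $P$ --- no ``cut-theoretic argument'' about the nonexistence of your path $Q$ is involved, and nonexistence of $Q$ does not in fact force the out-arcs of $s'$ into $A(T)\cup A(P)$ (Menger gives a cut somewhere, not at $s'$). (2) Your degree bounds are false as stated. The neighbours of $u_i$ in $S$ are not ``essentially $s$ and $s'$'': $u_i$ may be adjacent to many vertices of the complementary sub-ditrail $P_1$ of $S$; the correct bound is $d_S(x_i)\le|P_1|$, obtained from the insertion lemma (Lemma \ref{notST}/Corollary \ref{notSx}: if $x$ cannot be inserted into a ditrail keeping its endpoints, then $d_S(x)\le|S|$), which your proposal never invokes and without which the count $d(x_i)+d(y_j)\le 2n-4$ cannot be reached. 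Likewise $u_i$ may have arbitrarily many neighbours in $V(D)\setminus(V(S)\cup\{u_1,\dots,u_k\})$; the actual argument does not bound $d(u_i)$ and $d(v)$ separately outside $S$ but bounds $d_R(x_i)+d_R(y_j)\le 2(n-s-1)$ \emph{jointly}, because a common external neighbour in the right orientation would create a detour enlarging $S$. So the architecture is right but the quantitative heart of the proof --- the insertion lemma and the joint counting over $P_1$, $W$, and $R$ --- is missing, and the route you sketch for producing it would not close.
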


It is
observed that in Theorem \ref{BaMa14 2n-3}, degree sum conditions on every pairs of nonadjacent
vertices are needed to warrant the digraph to be supereulerian.
The main purpose of this paper is to
 consider a degree sum condition about pairs of dominated (dominating) nonadjacent vertices
no longer on all pairs of nonadjacent vertices.
First, we give the following conjectures.
If they are true, then each of them can be seen as a generalization of Theorem \ref{BaMa14 2n-3}.
\begin{Conjecture}\label{di 2n-3 1}
Let $D$ be a strong digraph with $n$ vertices.
If for any pair of dominated nonadjacent vertices $\{u,v\}$ of $D$,
 $d(u)+ d(v)\geq 2n -3$, then $D$ is supereulerian.
\end{Conjecture}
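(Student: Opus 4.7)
The plan is to proceed by contradiction: suppose $D$ is a strong $n$-vertex digraph satisfying the dominated-pair degree sum hypothesis, yet $D$ has no spanning closed ditrail. I would follow the overall shape of the Bang-Jensen--Maddaloni proof of Theorem~\ref{BaMa14 2n-3}, but modify every appeal to the unrestricted condition $d(u)+d(v)\geq 2n-3$ so that only nonadjacent pairs sharing a common in-neighbor are invoked.

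First, I would show that $D$ admits a cycle factor $\mathcal{F}$, that is, a spanning subdigraph in which every vertex has in- and out-degree exactly $1$. By König's theorem applied to the bipartite representation $B(D)$ with parts $V^+$ and $V^-$, the failure of a cycle factor is equivalent to the existence of some $S\subseteq V(D)$ with $|N^+_D(S)|\leq |S|-1$. In that case a pigeonhole argument produces two distinct vertices $u,v\in S$ sharing an out-neighbor, so that $\{u,v\}$ is a dominated pair in the reverse digraph; the deficiency bound together with strong connectivity forces $u$ and $v$ to be nonadjacent with $d(u)+d(v)\leq 2n-4$, contradicting the hypothesis.

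Having secured a cycle factor, I would choose $\mathcal{F}=C_1\cup\cdots\cup C_k$ minimizing $k$ and, among all such factors, maximizing the number of arcs of $D$ between a fixed pair of cycles. Since $D$ is not supereulerian, $k\geq 2$. Strong connectivity supplies arcs between any two cycles, and minimality of $k$ rules out the standard rerouting merges that would combine $C_i$ and $C_j$ into a single closed ditrail covering $V(C_i)\cup V(C_j)$. Translating these nonmerge restrictions into a list of forbidden arcs and summing the in- and out-degrees of carefully chosen vertices across two cycles should yield an upper bound on $d(u)+d(v)$ that violates $2n-3$.

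The main obstacle is precisely ensuring that the pair $\{u,v\}$ witnessing this degree-sum bound is both \emph{nonadjacent} and \emph{dominated}. Nonadjacency is typically automatic from the nonmerge pattern, since an arc $uv$ or $vu$ would immediately permit a merge. Domination, however, is not free: the Bang-Jensen--Maddaloni argument never needs a common in-neighbor, whereas here I must locate $u\in V(C_i)$ and $v\in V(C_j)$ that both lie in $N^+_D(w)$ for some $w\in V(D)$. I anticipate obtaining this by taking $w$ to be the predecessor on $C_i$ of an endpoint of a crossing arc, or a vertex on a third cycle with out-arcs to both $C_i$ and $C_j$, and then using the extremal choice of $\mathcal{F}$ to guarantee that such a $w$ exists. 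Verifying this in every configuration, and handling the low-$k$ boundary cases $k=2,3$ by hand, is where the bulk of the technical work, and the main difficulty, will lie.
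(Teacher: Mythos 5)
The statement you are addressing is presented in the paper as a \emph{conjecture}: the authors explicitly say it seems quite difficult and only establish it for semicomplete multipartite digraphs (Theorem~\ref{smd 2n-3}) and under strengthened hypotheses (Theorems~\ref{min n-2} and~\ref{5/2-11/2}). So there is no proof in the paper to compare against, and your proposal, as written, does not close the gap either. The most concrete defect is in your first step: when the deficiency version of Hall's condition fails you extract two vertices $u,v$ sharing a common \emph{out}-neighbour, i.e.\ a \emph{dominating} pair, and you yourself note it is only ``dominated in the reverse digraph.'' Conjecture~\ref{di 2n-3 1} imposes the degree-sum bound only on dominated pairs of $D$ itself, so the hypothesis cannot be invoked there. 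Even if you switch to the in-neighbourhood form of the deficiency condition to obtain a genuinely dominated pair, your claims that this pair is automatically nonadjacent and satisfies $d(u)+d(v)\leq 2n-4$ are asserted rather than proved; a deficient set can certainly contain adjacent vertices, and you would have to show that \emph{some} nonadjacent dominated pair inside it has small degree sum.

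The second half of the proposal is where the real difficulty lies, and you acknowledge it is unresolved (``should yield,'' ``I anticipate''). Producing a nonadjacent pair violating the degree bound is the routine part of the Bang-Jensen--Maddaloni scheme; forcing that pair to be \emph{dominated} is exactly the obstruction that keeps this statement a conjecture. For comparison, the paper's partial results do not use cycle factors at all: they take a closed ditrail $S$ of maximum order, an $(S,S)$-dipath $T=y_0x_1\cdots x_ty_{p+1}$ chosen so that the sub-ditrail $P$ of $S$ from $y_0$ to $y_{p+1}$ is as short as possible, and observe that $\{x_1,y_1\}$ is a nonadjacent pair dominated by $y_0$; Corollary~\ref{notSx} and Lemma~\ref{notST} then bound $d(x_1)+d(y_1)$. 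That argument reaches $2n-4$ only when the structure collapses to $t=p=1$, which the semicomplete multipartite hypothesis forces via Lemma~\ref{SMD} but a general digraph does not. Your outline would need, at minimum, a correct cycle-factor existence argument that uses only dominated pairs, and a complete case analysis locating a dominated nonadjacent pair in every nonmerge configuration, before it could be regarded as a proof.
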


\begin{Conjecture}\label{di 2n-3 2}
Let $D$ be a strong digraph with $n$ vertices.
If for any pair of dominated or dominating nonadjacent vertices $\{u,v\}$ of $D$,
$d(u)+ d(v)\geq 2n -3$, then $D$ is supereulerian.
\end{Conjecture}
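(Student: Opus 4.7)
The plan is to argue by contradiction, in the spirit of the longest-closed-ditrail method used for Theorem \ref{BaMa14 2n-3}. Suppose $D$ is strong, satisfies the hypothesis, but is not supereulerian, and let $C$ be a closed ditrail in $D$ maximizing $|V(C)|$. Strong connectivity guarantees $V(C) \neq \emptyset$, and the counterexample assumption gives $R := V(D) \setminus V(C) \neq \emptyset$. Using strong connectivity again, I would fix a shortest $(V(C), R)$-dipath $P$ and a shortest $(R, V(C))$-dipath $Q$; by minimality their internal vertices lie in $R$, and by the maximality of $|V(C)|$ any splicing of the terminal arc of $P$ or the initial arc of $Q$ back into $C$ that would enlarge $|V(C)|$ must fail. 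This forces strong local restrictions at the $C$--$R$ boundary that will be used repeatedly.

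First I would try to harvest nonadjacent dominated or dominating pairs from this boundary. If some $x \in V(C)$ has two out-neighbors $u_1, u_2 \in R$, then $\{u_1, u_2\}$ is a dominated pair, so either $u_1$ and $u_2$ are adjacent, in which case the arc between them combined with $(x, u_1)$ and $(x, u_2)$ should furnish a closed ditrail strictly larger than $C$, contradicting maximality, or else the hypothesis gives $d(u_1) + d(u_2) \geq 2n-3$. The extremality of $C$ sharply caps the number of arcs between $R$ and $V(C)$, since each $u \in R$ can have only limited interaction with $V(C)$ without enabling an extension; balancing these caps against the $2n-3$ bound should defeat most configurations. A symmetric harvest handles two in-neighbors in $R$ of a common $y \in V(C)$, which are a dominating pair.

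The hardest case, and what I expect to be the main obstacle (and the reason this is posed as a conjecture rather than a theorem), is the thin-interface regime in which every $x \in V(C)$ has at most one out-neighbor in $R$ and at most one in-neighbor in $R$. Here dominated and dominating pairs cannot be produced across the interface, so they must be generated \emph{inside} $R$ (by a vertex of $R$ with two distinct out- or in-neighbors in $R$) or \emph{inside} $V(C)$ using vertices of $V(C)$ that happen to be nonadjacent in the induced subdigraph. In this regime I would need a finer structural lemma, either showing that $D\langle R\rangle$ already contains a closed spanning ditrail that can be glued onto $C$ via $P$ and $Q$, or exhibiting a rotation of $C$ along a chord that strictly enlarges $V(C)$. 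Making this last step work without assuming the $2n-3$ condition on arbitrary nonadjacent pairs appears to be the crux, and I would expect to need a careful double-counting over all boundary vertices, combined with case analysis based on whether $P$ and $Q$ share their endpoint in $V(C)$ and on the length of the boundary dipaths.
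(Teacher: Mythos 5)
The statement you are asked to prove is labelled a \emph{Conjecture} in the paper, and the paper does not prove it: the authors explicitly say it ``seems quite difficult to prove'' and only establish special cases, namely Theorem \ref{smd 2n-3} (semicomplete multipartite digraphs, and only for Conjecture \ref{di 2n-3 1}), Theorem \ref{min n-2} (with the extra hypothesis $\min\{d^-(u)+d^+(v),\,d^+(u)+d^-(v)\}\geq n-2$), and Theorem \ref{5/2-11/2} (with the stronger bound $\frac{5}{2}n-\frac{11}{2}$). Your proposal is likewise not a proof: you yourself flag the ``thin-interface regime'' as an unresolved obstacle, and that is exactly where the difficulty lives, so there is nothing to certify here beyond an outline that stops at the hard part.

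Two concrete issues with the outline itself. First, your harvesting step is broken as stated: if $x\in V(C)$ has out-neighbours $u_1,u_2\in R$ and $(u_1,u_2)\in A(D)$, the arcs $(x,u_1),(u_1,u_2)$ do \emph{not} yield a closed ditrail larger than $C$ unless you also have an arc from $u_2$ (or $u_1$) back into $V(C)$; adjacency of a dominated pair inside $R$ gives no contradiction with the maximality of $C$ by itself. Second, your way of producing dominated/dominating nonadjacent pairs is different from, and weaker than, the one the paper's partial results use: there, one takes an $(S,S)$-dipath $T=y_0x_1\cdots x_ty_{p+1}$ chosen so that the segment $P$ of $S$ from $y_0$ to $y_{p+1}$ is shortest, shows that the internal vertices $x_i$ of $T$ are nonadjacent to the internal vertices $y_j$ of $P$, and then observes that $\{x_1,y_1\}$ is dominated by $y_0$ and $\{x_t,y_p\}$ dominates $y_{p+1}$. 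That construction always yields usable pairs (no ``thin-interface'' escape), yet even with it the bound $2n-3$ on such pairs alone has not been shown sufficient --- which is precisely why the statement remains a conjecture and why the paper's theorems carry additional hypotheses. If you want a provable statement, aim at one of those special cases rather than the conjecture itself.
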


These conjectures seem quite difficult to prove, but we are able to prove them in the
special cases.
In this paper, we prove that the Conjecture \ref{di 2n-3 1}
holds for semicomplete multipartite digraphs
and provide some supports for the Conjecture \ref{di 2n-3 2}
by showing that if a strong digraph $D$ with $n$ vertices satisfies
$d(u)+ d(v)\geq 2n -3$ and min$\{d^-(u) + d^+(v), d^+(u) + d^-(v)\} \geq n -2$, or satisfies
 $d(u) + d(v)\geq \frac{5}{2}n -\frac{11}{2}$, for any pair of dominated or dominating nonadjacent vertices $\{u,v\}$ of $D$, then $D$ is supereulerian.
 All our results are sharp.

\section{Main results}

We need the following lemmas and corollary.

\begin{Lemma}\label{notST}(\cite{Al22})
Let $D$ be a digraph, $S=u_1u_2\cdots u_s$ and $T=v_1v_2\cdots v_t$ be two arc distinct ditrails of $D$.
If $D$ does not contain a $(u_1,u_s)$-ditrail with vertex set $V(S)\cup V(T)$, then $d_S^-(v_1) + d_S^+(v_t)\leq |S|$.
\end{Lemma}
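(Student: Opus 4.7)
The plan is to argue by contrapositive: I will show that if $d_S^-(v_1) + d_S^+(v_t) \geq |S| + 1$, then $D$ contains a $(u_1, u_s)$-ditrail whose vertex set is exactly $V(S) \cup V(T)$.

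First I would set up a pigeonhole on the positions of $S$. Writing $S = u_1 u_2 \cdots u_s$, let $A = \{i : 1 \leq i \leq s, \ (u_i, v_1) \in A(D)\}$ and $B = \{j : 1 \leq j \leq s, \ (v_t, u_j) \in A(D)\}$, viewed as subsets of the vertex set of $S$. The hypothesis gives $|A| + |B| \geq d_S^-(v_1) + d_S^+(v_t) \geq |V(S)| + 1$, so $A$ and $B$ must either share a common vertex $u_i \in N^-(v_1) \cap N^+(v_t)$, or admit a pair $u_i \in A$, $u_{i+1} \in B$ of consecutive vertices along $S$.

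Second, I would splice $T$ into $S$ at that index. In the first case I use the loop construction
\[
R \ = \ u_1 \cdots u_i \ v_1 v_2 \cdots v_t \ u_i u_{i+1} \cdots u_s,
\]
which detours from $u_i$ through $T$ and returns to $u_i$ before continuing along $S$. In the second case I use the skip construction
\[
R \ = \ u_1 \cdots u_i \ v_1 v_2 \cdots v_t \ u_{i+1} \cdots u_s,
\]
which replaces the single arc $(u_i, u_{i+1}) \in A(S)$ by the detour through $T$. Either way, $R$ is a walk from $u_1$ to $u_s$ whose vertex set is $V(S) \cup V(T)$, so the contradiction is achieved as soon as I know $R$ is a ditrail.

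Third, I would verify that $R$ uses no arc of $D$ twice. Since $S$ and $T$ are arc distinct, arcs inherited from $A(S)$ never collide with those inherited from $A(T)$. The only possible collisions are between the two connector arcs $(u_i, v_1)$ and $(v_t, u_{i+1})$ (or $(v_t, u_i)$) and arcs already lying in $A(S) \cup A(T)$; such a collision occurs only when $v_1$ or $v_t$ sits at a specific neighbouring position of $u_i$ along $S$ or $T$. In each such case the remedy is either to switch between the loop and the skip construction, or to move the insertion index to another witness in $A \cap B$ or in the admissible consecutive pairs supplied by the pigeonhole. The main obstacle is precisely this final bookkeeping: controlling the boundary indices $i \in \{1, s\}$ and managing the fact that $S$ and $T$ are ditrails rather than dipaths, so that $v_1$ or $v_t$ may repeat inside $T$ or coincide with vertices of $S$. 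The counting argument is straightforward; the delicate part is the case analysis verifying that one of the two constructions always produces a genuine ditrail.
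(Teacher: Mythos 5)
First, a remark on scope: the paper does not prove this lemma at all --- it is imported verbatim from \cite{Al22} --- so there is no in-house argument to compare yours against, and your proposal must stand on its own.

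Your counting step is fine: with $A=N^-(v_1)\cap V(S)$ and $B=N^+(v_t)\cap V(S)$ one has $d_S^-(v_1)+d_S^+(v_t)=|A|+|B|$, and $|A|+|B|\ge |V(S)|+1$ already forces $A\cap B\ne\emptyset$ (that is the entire pigeonhole; the ``consecutive pair'' alternative is neither needed for it nor guaranteed by it as a fallback). The genuine gap is precisely the step you defer, and it is not mere bookkeeping: there are instances satisfying the hypotheses in which \emph{every} loop insertion and \emph{every} skip insertion reuses an arc of $A(S)\cup A(T)$, so that neither of your constructions, at any admissible index, yields a ditrail, and all of your listed remedies are unavailable. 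For example, let $S$ be the single-arc ditrail $1\to 2$ and let $T$ be the ditrail $2\to x\to 2\to y\to x$, with $A(D)=\{(1,2),(2,x),(x,2),(2,y),(y,x),(x,1)\}$. These are arc-distinct ditrails, $v_1=2$, $v_t=x$, and $d_S^-(v_1)+d_S^+(v_t)=1+2=3=|V(S)|+1$, with $A=\{1\}$ and $B=\{1,2\}$. The only loop insertion is at $w=1$ and requires the connector $(1,v_1)=(1,2)\in A(S)$; the only skip insertion is across the arc $(1,2)$ and requires the connector $(v_t,u_2)=(x,2)\in A(T)$. Both collide, and there is no other witness to move to. Yet the conclusion of the lemma does hold here: $1\to 2\to y\to x\to 2$ is a $(u_1,u_s)$-ditrail with vertex set $V(S)\cup V(T)$. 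Observe that this witness discards two arcs of $T$ and uses no connector arc whatsoever, so the required ditrail need not have the shape ``$A(S)\cup A(T)$ plus connectors,'' which is the only shape your argument ever produces. A complete proof must handle the degenerate configurations in which $v_1$ or $v_t$ lies on $S$, or $T$ revisits $v_1$ or $v_t$, by splicing with a single connector at an occurrence of $v_1$ (or $v_t$) on $S$, or by re-routing inside $T$ and dropping arcs; until that case analysis is actually carried out, the argument is incomplete.
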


\begin{Corollary}\label{notSx}
Let $D$ be a digraph, $S=u_1u_2\cdots u_s$ be a ditrail in $D$ and $x\in V(D)$.
If $D$ does not contain a $(u_1,u_s)$-ditrail with vertex set $V(S)\cup \{x\}$, then $d_S(x) \leq |S|$.
%If, in addition, $(x,u_s)\not\in A(D)$ or $(u_1,x)\not\in A(D)$, then $d_S(x) \leq |S|-1$.
\end{Corollary}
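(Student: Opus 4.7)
The plan is to obtain the corollary as an immediate specialization of Lemma \ref{notST}. I would take $T$ to be the trivial ditrail consisting of the single vertex $x$, so that $t=1$ and $v_1=v_t=x$. Since $T$ has no arcs, $A(S)\cap A(T)=\emptyset$ trivially, i.e., $S$ and $T$ are arc distinct, and $V(S)\cup V(T)=V(S)\cup\{x\}$.

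By hypothesis, $D$ contains no $(u_1,u_s)$-ditrail with vertex set $V(S)\cup\{x\}$. Therefore Lemma \ref{notST} applies and yields
\[
d_S^-(v_1)+d_S^+(v_t)=d_S^-(x)+d_S^+(x)\leq |S|.
\]
Since by definition $d_S(x)=d_S^-(x)+d_S^+(x)$, this is exactly $d_S(x)\leq |S|$, which is the desired inequality.

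The only conceivable subtlety is whether the convention in Lemma \ref{notST} permits a length-zero ditrail (a single vertex). If one prefers to avoid this, the statement is trivial when $d_S(x)=0$, while if $d_S(x)\geq 1$ one can instead invoke the lemma with $T$ chosen as a single arc incident with $x$ and bookkeep the contribution of that arc; this accounts for at most one unit in $d_S^-(x)+d_S^+(x)$ and does not affect the bound $\leq|S|$. In either reading the corollary is immediate, so there is no real obstacle: the step amounts solely to recognizing $x$ as a degenerate instance of the two-ditrail configuration in Lemma \ref{notST}.
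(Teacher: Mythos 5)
Your derivation is exactly the intended one: the paper states this as an immediate corollary of Lemma \ref{notST} (with no separate proof), obtained by taking $T$ to be the trivial ditrail consisting of the single vertex $x$, so that $v_1=v_t=x$ and $d_S^-(x)+d_S^+(x)=d_S(x)\leq |S|$. Your proposal is correct and matches this specialization.
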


\begin{Lemma}\label{SMD} (\cite{BaMa14})
Let $u,v, w$ be vertices of a semicomplete multipartite digraph $D$, such
that there is an arc between $u, v$. Then there is an arc between $w$ and $\{u,v\}$ in $D$.
\end{Lemma}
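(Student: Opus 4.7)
The plan is to exploit the partition structure that underlies any semicomplete multipartite digraph. By definition, the underlying graph of $D$ is a complete multipartite graph, so its vertex set decomposes into partite sets $V_1, V_2, \ldots, V_k$, and for any two vertices $x, y \in V(D)$ there is at least one arc between $x$ and $y$ if and only if $x$ and $y$ lie in different partite sets. Nonadjacency in $D$ is therefore equivalent to membership in a common partite set.

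With this observation, the proof is a one-line contrapositive argument. The hypothesis that there is an arc between $u$ and $v$ means $u \in V_i$ and $v \in V_j$ with $i \neq j$. Suppose, for contradiction, that there is no arc between $w$ and $u$ and no arc between $w$ and $v$; then $w$ belongs to the same partite set as $u$ and to the same partite set as $v$, so both $u$ and $v$ lie in the same part as $w$. This forces $u$ and $v$ into a common partite set, contradicting the existence of the arc between them. Hence $w$ must be adjacent to at least one of $u$ and $v$.

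There is no real obstacle here; the statement is essentially a restatement of the definition of a complete multipartite underlying graph combined with the transitivity of the equivalence relation ``lying in the same partite set.'' The only thing to be careful about is the direction of the implication: nonadjacency is a sufficient and necessary condition for being in the same partite set, and this equivalence is what makes the two-line contradiction go through.
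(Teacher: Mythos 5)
Your proof is correct: the paper cites this lemma from Bang-Jensen and Maddaloni without reproducing a proof, and your argument --- that nonadjacency in a semicomplete multipartite digraph is exactly co-membership in a partite set, so $w$ nonadjacent to both $u$ and $v$ would force $u$ and $v$ into the same part, contradicting the arc between them --- is the standard and essentially the only proof of this fact.
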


\begin{Theorem}\label{smd 2n-3}
Let $D$
be a strong semicomplete multipartite digraph with $n$ vertices.
If $d(u)+ d(v)\geq 2n -3$ for any pair of dominated nonadjacent vertices $\{u,v\}$ of $D$, then $D$ is supereulerian.
\end{Theorem}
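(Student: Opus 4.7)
The plan is a proof by contradiction. Suppose $D$ is not supereulerian, and choose a closed ditrail $T$ of $D$ with $|V(T)|$ maximum and, subject to that, $|A(T)|$ maximum. Let $R:=V(D)\setminus V(T)$, which is nonempty by assumption. Since $D$ is strong, fix an arc $(u_1,x)\in A(D)$ with $u_1\in V(T)$ and $x\in R$, and write $T=u_1u_2\cdots u_\ell u_1$ starting at $u_1$.

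\emph{Step 1: Finding a dominated nonadjacent pair.} The pair $\{x,u_2\}$ is dominated by $u_1$. The arc $(x,u_2)$ cannot lie in $A(D)$: otherwise replacing $(u_1,u_2)$ in $T$ by the dipath $u_1,x,u_2$ produces a closed ditrail on $V(T)\cup\{x\}$, contradicting maximality. If $\{x,u_2\}$ is nonadjacent we are done; otherwise $(u_2,x)\in A(D)$, and we iterate with $u_2$ in place of $u_1$. Should this rotation complete a full loop around $T$ without producing a nonadjacent pair, then every $u_i\in V(T)$ satisfies $(u_i,x)\in A(D)$ while $x$ has no out-arc into $V(T)$. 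In that case I take a shortest dipath $x=y_0,y_1,\ldots,y_m$ from $x$ to $V(T)$ (strongness gives this; $y_1,\ldots,y_{m-1}\in R$, $y_m\in V(T)$, $m\ge 2$). The arcs $(y_m,x),(x,y_1),\ldots,(y_{m-1},y_m)$ form a closed detour at $y_m$ that is arc-disjoint from $A(T)$, and appending it to $T$ yields a balanced, connected subdigraph --- hence a closed ditrail --- on $V(T)\cup\{x,y_1,\ldots,y_{m-1}\}$, again contradicting maximality. Therefore the iteration halts at a dominated nonadjacent pair $\{x,u'\}$ with $u'\in V(T)$.

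\emph{Step 2: Contradicting the degree hypothesis.} By hypothesis $d(x)+d(u')\ge 2n-3$, and I bound each term from above. For $d(x)$: apply Corollary \ref{notSx} to $T$ rotated to start at $u'$ (so $T$ is a $(u',u')$-ditrail) and the vertex $x$; since no closed ditrail on $V(T)\cup\{x\}$ exists, $d_T(x)\le |T|$. Combine this with $d_R(x)\le 2(|R|-1)$, refined via Lemma \ref{SMD} (vertices in the partite class of $x$ contribute no arcs at $x$). For $d(u')$: apply Lemma \ref{notST} with $S=T$ and an auxiliary short ditrail through $x$ (built from the in-arc $(u_1,x)$ of Step 1 together with the return dipath of the detour argument) to bound $d_T^-(u')+d_T^+(u')$; combine with $d_R(u')\le 2|R|$, refined since $\{x,u'\}$ is nonadjacent so $x$ and $u'$ share a partite class (other vertices of that class in $R$ also contribute no arcs at $u'$). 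Summing these bounds yields $d(x)+d(u')\le 2n-4$, contradicting the hypothesis.

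The principal obstacle is Step 1 --- specifically, verifying that the ``$T$ plus detour'' construction is a single closed ditrail (a connected balanced subdigraph admits an Eulerian circuit), and managing the case analysis to actually force a nonadjacent pair. The secondary obstacle is Step 2's arithmetic: the bound $2n-3$ is sharp, so every unit of slack in the degree estimates must be accounted for using Lemma \ref{SMD} and the partite-class structure of the semicomplete multipartite digraph.
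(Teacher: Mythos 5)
Your Step 1 (the rotation argument) is essentially sound and, if executed carefully, does produce a dominated nonadjacent pair $\{x,u'\}$ with $x\notin V(T)$ and $u'\in V(T)$; the real problem is Step 2, which has a genuine gap: the degree estimates you sketch do not sum to $2n-4$, and the shortfall cannot be recovered for the pair your rotation delivers. Concretely, Corollary~\ref{notSx} gives $d_T(x)\le |T|=s$ and trivially $d_R(x)\le 2(n-s-1)$; but for $u'$ you have no valid instance of Lemma~\ref{notST} whose distinguished endpoints are $u'$ --- the auxiliary ditrail you describe has endpoints $u_i$ and $x$, so the lemma would bound $d_T^-(u_i)+d_T^+(x)$, not $d_T(u')$. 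Thus the only available bounds are $d_T(u')\le 2(s-1)$ and $d_R(u')\le 2(n-s-1)$. Even granting the optimistic estimate $d_T(u')\le s$, the total is $2s+4(n-s-1)=4n-2s-4$, which is at least $2n-3$ for every $s\le n-1$, i.e.\ always; no contradiction follows. The decisive estimate in the paper is a \emph{joint} bound $d_R(x_1)+d_R(y_1)\le 2(n-s-1)$ (each $z\in R$ contributes at most $2$ to the \emph{sum}, not to each term), and proving it requires having pinned down both an arc into $x$ from the predecessor of its nonadjacent partner \emph{and} an arc out of $x$ back into the closed ditrail at a controlled position; your rotation fixes only the in-arc $(u_i,x)$.

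The paper obtains the needed structure differently: it chooses an $(S,S)$-dipath $T=y_0x_1\cdots x_ty_{p+1}$ minimizing the length of the sub-ditrail $P$ of $S$ from $y_0$ to $y_{p+1}$, shows every internal vertex of $P$ is nonadjacent to every $x_i$, and then invokes Lemma~\ref{SMD} to force $t=p=1$. The nonadjacent partner of $x_1$ is then $y_1$, the unique internal vertex of $P$, and --- this is precisely the property your $u'$ lacks --- $S$ minus $y_1$ still contains a $(y_2,y_0)$-ditrail $P_1$ with $V(P_1)=V(S)\setminus\{y_1\}$, so Corollary~\ref{notSx} applies to $y_1$ as well and yields $d_S(y_1)=d_{P_1}(y_1)\le s-1$, matching $d_S(x_1)\le s-1$. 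Together with the joint bound on $R$ this gives $d(x_1)+d(y_1)\le 2n-4$. To repair your argument you would need to replace the rotation by (or supplement it with) this choice of attachment path and partner; as written, Step 2 cannot be completed.
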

\begin{proof}
Suppose to the contrary
that $D$ is a nonsupereulerian semicomplete multipartite digraph.
Let $S$ be a closed ditrail with $|S|=s$ maximized in $D$.
Then $s< n$.
Since $D$ is strong, there exists an $(S, S)$-dipath $T$ with $|T|\geq 3$.
 Choose an $(S, S)$-dipath $T$
such that the length of the ditrail $P$ is minimum in $S$, where $P$ is a shortest $(u,v)$-ditrail
which
travels along $S$ from $u$ to $v$ such that
the initial vertex $u$ of $P$ is the initial vertex of $T$ and the terminal vertex $v$ of $P$ is the terminal vertex of $T$.
W.l.o.g., write $T=y_0x_1x_2\cdots x_ty_{p+1}$, that is $u=y_0$ and $v=y_{p+1}$.
Let $W = \{y_1,y_2,\cdots, y_p\}$ be the set of internal vertices of $P$,
$P_1$ be a longest ditrail which
travels along $S$ from $y_{p+1}$ to $y_0$ and $R=D-S$.
Then $|P_1|=s-p+c$, where $c= |W\cap P_1|.$
By the maximality of $S$, we have $y_0\not=y_{p+1},$ $(y_0,y_{p+1})\notin A(S)$ and $p\geq 1$.

By the choice of $T$ and the maximality of $S$, for any $i\in [t]$, we have $d_W(x_i)=0$.
If $d_W^+(x_i)>0$ or $d_W^-(x_i)>0$, then w.l.o.g., we may assume that $d_W^+(x_i)>0$, and so there exists a vertex $y_j\in W$
($j\in [p]$) such that $(x_i,y_j)\in A(D)$.
If $2\leq j\leq p$, then we can get another
$(S,S)$-dipath $T'$ with the initial vertex $y_0$ and the terminal vertex
$y_j$ such that the length of $(y_0,y_j)$-ditrail $P'$ in $S$ is less then
the length of $P$ in $S$, contrary to the choice of $T$ above. If $j=1$, then $S\cup T_{[y_0,x_i]}+(x_i,y_1)-(y_0,y_1)$ is a closed ditrail
with $|S\cup T_{[y_0,x_i]}+(x_i,y_1)-(y_0,y_1)|>|S|$, contrary to the maximality of $S$.
Therefore $d_W^+(x_i)=0$.
The proof for $d_W^-(x_i)=0$ is similar.
In particular, $x_i$ and $y_j$ are nonadjacent, for $i\in [t]$ and $j\in [p]$.

If $t\geq2$, then there exist two vertices $x_1,x_2$ with $(x_1,x_2)\in A(D)$.
By Lemma \ref{SMD}, there is an arc between $y_1$ and $\{x_1,x_2\}$, a contradiction.
 Thus $t = 1$. By
 similar arguments, we can get $ p=1$.
Therefore $T=y_0x_1y_{p+1},P=y_0y_1y_{p+1}$ and $|S|=s=|P_1|+1$.

By the maximality of $S$ and Corollary \ref{notSx}, we get that $D$ does not have a $(y_{p+1},y_0)$-ditrail with vertex set $V(P_1)\cup \{x_1\}$
and
\begin{equation} \label{SMd_{S}(x_1)}
d_{S}(x_1)=d_W(x_1)+d_{P_1}(x_1)\leq |P_1|=s-1.
\end{equation}

By the maximality of $S$, $D$ does not have a $(y_{p+1},y_0)$-ditrail with vertex set $V(P_1)\cup \{y_1\}$.
Then by Corollary \ref{notSx}, we get
\begin{equation} \label{SMd_{P_1}(y_1)}
d_{S}(y_1)=d_{P_1}(y_1)\leq |P_1|=s-1.
\end{equation}

By the maximality of $S$,
there is no vertex $z\in R$ satisfying $\{(y_1,z), (z,x_1)\}\subseteq A(D)$
or $\{(x_1,z),(z,y_1)\}\subseteq A(D)$, for any $j\in [p]$. Accordingly,
\begin{equation} \label{SMd_R(x_1)+d_{R}(y_1)}
d_R(x_1)+d_{R}(y_1)\leq 2(n-s-1).
\end{equation}

Combining (\ref{SMd_{S}(x_1)}), (\ref{SMd_{P_1}(y_1)}) and (\ref{SMd_R(x_1)+d_{R}(y_1)}),
note that $\{x_1,y_1\}$ is a pair of dominated nonadjacent vertices,
 we can obtain that
\[ d(x_1) + d(y_1)\leq 2n-4,\]
  contrary to the assumption of Theorem \ref{smd 2n-3}.
  This proves Theorem \ref{smd 2n-3}.

\end{proof}

\begin{Theorem}\label{min n-2}
Let $D$ be
a strong digraph with $n$ vertices.
 If $d(u)+ d(v)\geq 2n -3$ and min$\{d^-(u) + d^+(v), d^+(u) + d^-(v)\} \geq n -2$ for any pair of dominated or dominating nonadjacent vertices $\{u,v\}$ of $D$, then $D$ is supereulerian.
\end{Theorem}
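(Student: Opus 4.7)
I would argue by contradiction, following the template of the proof of Theorem~\ref{smd 2n-3}. Assume $D$ is not supereulerian, fix a closed ditrail $S$ of maximum length $s<n$, and by strong connectivity choose an $(S,S)$-dipath $T=y_0x_1\cdots x_ty_{p+1}$ such that the corresponding subditrail $P=y_0y_1\cdots y_{p+1}$ of $S$ is shortest. Retain the notation $W=\{y_1,\ldots,y_p\}$, $P_1$ and $R=D-S$. The opening structural steps of that proof, which derive $d_W^+(x_i)=d_W^-(x_i)=0$ for every $i\in[t]$, use only the maximality of $S$ and the minimality of $P$ (not Lemma~\ref{SMD}), so they transfer verbatim. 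In particular $\{x_1,y_1\}$ is a dominated nonadjacent pair with common in-neighbour $y_0$, and symmetrically $\{x_t,y_p\}$ is a dominating nonadjacent pair with common out-neighbour $y_{p+1}$, so both the sum and the min hypotheses apply to both pairs.

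Without Lemma~\ref{SMD} I cannot force $t=p=1$ immediately, so I would analyse the dominated pair $\{x_1,y_1\}$ (and symmetrically $\{x_t,y_p\}$) directly. Corollary~\ref{notSx} still gives $d_S(x_1),d_S(y_1)\leq|P_1|$, and the maximality of $S$ still prohibits any $z\in R\setminus\{x_1\}$ from satisfying $\{(y_1,z),(z,x_1)\}\subseteq A(D)$ or $\{(x_1,z),(z,y_1)\}\subseteq A(D)$, because either configuration combines with $P_1$ and $T$ into a strictly longer closed ditrail. These exclusions yield
\begin{equation*}
d_R^+(y_1)+d_R^-(x_1)\leq n-s-1, \qquad d_R^+(x_1)+d_R^-(y_1)\leq n-s-1,
\end{equation*}
and together with the Corollary~\ref{notSx} bounds they produce $d(x_1)+d(y_1)\leq 2n-4$. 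In the subcase $t=p=1$ this alone contradicts $d(x_1)+d(y_1)\geq 2n-3$ exactly as in Theorem~\ref{smd 2n-3}, and the min hypothesis is not needed.

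The interesting case is $t\geq2$ or $p\geq2$, where the chain of $x_j$'s ($j\geq 2$) lying in $R$, and the possible revisits of $W$ by $P_1$, create extra adjacencies that slightly inflate the naive bounds and so can close the one-unit gap between $2n-4$ and $2n-3$. The min hypothesis enters here to control the four partial sums $d^\pm(x_1)+d^\mp(y_1)$ separately. Concretely, I would refine each in/out $R$-bound using the forbidden detours and each in/out $S$-bound using Lemma~\ref{notST} applied to $P_1$ together with an auxiliary ditrail connecting $x_1$ and $y_1$ through the interior of $T$ or $P$; the four resulting inequalities, summed and compared with the two min lower bounds at $\{x_1,y_1\}$, together with the parallel analysis at the dominating pair $\{x_t,y_p\}$, would force an equality case that itself produces an extension of $S$, contradicting its maximality.

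The main obstacle is precisely this bookkeeping when $t\geq 2$ or $p\geq 2$: one must verify that each forbidden configuration exploited for the $R$-bounds remains forbidden by the maximality of $S$ even when $T$ and $P$ are longer, and that the in/out refinements of Lemma~\ref{notST} really do deliver the extra unit needed. The role of the min hypothesis is to prevent the in/out asymmetry introduced by the chain $x_1\to x_2\to\cdots\to x_t$ from absorbing the one-unit deficit, and leveraging the dominating pair at the other end of $T$ in tandem with the dominated pair is what closes the argument.
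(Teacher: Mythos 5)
Your setup (maximal closed ditrail $S$, minimal $P$, the pairs $\{x_1,y_1\}$ dominated by $y_0$ and $\{x_t,y_p\}$ dominating $y_{p+1}$, Lemma \ref{notST} on $P_1$ versus $T_{[x_1,x_t]}$ and versus $P_{[y_1,y_p]}$, the $R$-exclusions, and the min hypothesis applied at both ends of $T$) is exactly the skeleton of the paper's proof, and your $t=p=1$ subcase is fine. But two of your concrete claims fail when $p\geq 2$, and the part you defer is where most of the work lies. First, Corollary \ref{notSx} does \emph{not} give $d_S(y_1)\leq|P_1|$: a $(y_{p+1},y_0)$-ditrail on $V(P_1)\cup\{y_1\}$ only contradicts maximality when it omits no other internal vertex of $P$, so for $p\geq2$ nothing forbids it. The paper instead bounds only $d^-_{P_1}(y_1)+d^+_{P_1}(y_p)\leq|P_1|$ via Lemma \ref{notST} applied to the \emph{pair} $(P_1,P_{[y_1,y_p]})$, which is why the argument must couple $y_1$ with $y_p$ rather than treat $y_1$ alone. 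Second, your forbidden configuration $\{(y_1,z),(z,x_1)\}$ with $z\in R$ is not actually excluded by maximality for $p\geq 2$: routing $y_1 z x_1\cdots x_t y_{p+1}$ either unbalances degrees at $y_1,y_{p+1}$ or skips $y_2,\ldots,y_p$. The configurations that genuinely extend $S$ are $\{(x_1,z),(z,y_1)\}$ and $\{(y_p,z),(z,x_t)\}$, giving the single combined bound $d_R^+(y_p)+d_R^-(x_t)+d_R^+(x_1)+d_R^-(y_1)\leq 2(n-s-1)$; your two separate $R$-inequalities are not both available.

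The deeper gap is the endgame you describe as ``force an equality case that itself produces an extension of $S$.'' In the paper, summing the four correct inequalities against $2(n-2)$ yields an upper bound of $2n-4-2c$, which first forces $c=0$ and makes every inequality tight; then the $2n-3$ hypothesis is invoked again to get $d_{P_1}(y_1),d_{P_1}(y_p)\geq|P_1|+1$, which produces digons from $y_1$ and $y_p$ into $P_1$, and a further page of analysis pins these digons to $y_0$ and $y_{p+1}$, shows $|P_1|=2$, and only then extracts a longer closed ditrail. Crucially, this phase relies on \emph{secondary} extremal choices absent from your setup: $S$ must also maximize $|A(S)|$ subject to maximizing $|V(S)|$, and $P$ must also minimize $|A(P)|$ (this is what yields $d^+_P(y_0)=d^-_P(y_{p+1})=1$ and lets one add digons to increase $|A(S)|$ for a contradiction). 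Without those refinements the equality case does not self-destruct, so the proposal as written does not close.
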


\begin{proof}
Suppose, on the contrary, that $D$ is a nonsupereulerian digraph.
Let $S=y_0y_1\cdots y_py_{p+1}\cdots y_{m-1}y_0$ be a closed ditrail
such that : (1) $|V (S)|=s$ is maximum in $D$,
and (2) subject to (1), $|A(S)|$ is maximum in $D$.
Then $s< n$. Let $R=D-S$.
As $D$ is strong, there exists an $(S, S)$-dipath $T$ with $|T|\geq 3$.

Choose an $(S, S)$-dipath $T$ with $|T|\geq 3$
such that: (1) $|V(P)|$ of the ditrail $P$ is minimum in $S$,
and (2) subject to (1), $|A(P)|$ of the ditrail $P$ is minimum in $S$, where $P$ is a shortest $(u,v)$-ditrail
which
travels along $S$ from $u$ to $v$ such that
the initial vertex $u$ of $P$ is the initial vertex of $T$ and the terminal vertex $v$ of $P$ is the terminal vertex of $T$.
W.l.o.g., write $T=y_0x_1\cdots x_ty_{p+1}$, that is $u=y_0$ and $v=y_{p+1}$.
Let $W = \{y_1,y_2,\cdots, y_p\}$ be the set of internal vertices of $P$,
 $P_1$ be a longest ditrail from $y_{p+1}$ to $y_0$ in $S$.
Then $|P_1|=s-p+c$ and $A(S)=A(P)\cup A(P_1)$, where $c= |W\cap P_1|.$

 By the maximality of $S$, we have $y_0\not=y_{p+1},$ $(y_0,y_{p+1})\notin A(S)$ and $p\geq 1$.
 This together with the fact that $P$ is a
  $(y_{0},y_{p+1})$-ditrail implies $d^{+}_{P}(y_{0})- d^{-}_{P}(y_{0})=1$ and
   $d^{-}_{P}(y_{p+1})- d^{+}_{P}(y_{p+1})=1$.
  If for an integer $k\geq 1$, $d^{+}_{P}(y_{0})=k+1\geq 2$,
  then $d^{-}_{P}(y_{0})=k$. Hence we can denote the ditrail
  $P=y_{0}\cdots y_{0}^{1}\cdots y_{0}^{2} \cdots y_{0}^{k}\cdots y_{p+1}$, where $y_{0}=y_{0}^{1}=y_{0}^{2}=\cdots=y_{0}^{k}$. For any $h\in [k]$, $P_{[y_{0}^{h},y_{p+1}]}=y_{0}^{h}\cdots y_{p+1}$
  is also a ($y_{0},y_{p+1}$)-ditrail which travels along $S$ from $y_{0}$ to $y_{p+1}$.
  If $|V(P_{[y_{0}^{h},y_{p+1}]})|=|V(P)|$, then $|A(P_{[y_{0}^{h},y_{p+1}]})|<|A(P)|$,
  contrary to the choice of $P$. If $|V(P_{[y_{0}^{h},y_{p+1}]})|\not =|V(P)|$,
  then $|V(P_{[y_{0}^{h},y_{p+1}]})|<|V(P)|$ and $|A(P_{[y_{0}^{h},y_{p+1}]})|<|A(P)|$,
  contrary to the choice of $P$. Hence $k=0$, $d^{+}_{P}(y_{0})=1$ and $d^{-}_{P}(y_{0})=0$.
  By similar arguments, we can get that $d^{-}_{P}(y_{p+1})=1$ and $d^{+}_{P}(y_{p+1})=0$. Therefore,
\begin{equation} \label{d^{+}_{P}(y_0)=d^{-}_{P}(y_{p+1})=1}
\begin{split}
d^{+}_{P}(y_0)=d^{-}_{P}(y_{p+1})=1\mbox{ and } d^{-}_{P}(y_0)=d^{+}_{P}(y_{p+1})=0.
\end{split}
\end{equation}

By the choice of $T$ and the maximality of $S$, for any $i\in [t]$, we have $d_W(x_i)=0$.
In particular, $x_i$ and $y_j$ are nonadjacent, for any $i\in [t]$, $j\in [p]$.

By the maximality of $S$, we get that $D$ does not have a $(y_{p+1},y_0)$-ditrail with vertex set $V(P_1)\cup \{x_i\}$. By $d_W(x_i)=0$
and Corollary \ref{notSx}, we can deduce that
\begin{equation} \label{d_{S}(x_1)}
d_{S}(x_i)=d_{W}(x_i)+d_{P_1-W}(x_i)=d_{P_1-W}(x_i)\leq |P_1|-c=s-p.
\end{equation}

If there exists a $(y_{p+1},y_0)$-ditrail $S'$ with vertex set $V(P_1)\cup V(T_{[x_1,x_t]})$,
then $S'\cup P$ is a closed ditrail in $D$ and $|V(S'\cup P)|>|V(S)|$, contrary to the maximality of $S$.
Thus $D$ does not have a $(y_{p+1},y_0)$-ditrail with vertex set $V(P_1)\cup V(T_{[x_1,x_t]})$.
Then by Lemma \ref{notST}, we get
\begin{equation}\label{d^+_{P_1}(x_t)+d^-_{P_1}(x_1)}
d^-_{P_1}(x_1)+d^+_{P_1}(x_t)\leq |V(P_1)|.
\end{equation}

By (\ref{d_{S}(x_1)}) and (\ref{d^+_{P_1}(x_t)+d^-_{P_1}(x_1)}),
\begin{equation}\label{d^+_{P_1}(x_1)+d^-_{P_1}(x_t)}
d^-_{P_1}(x_t)+d^+_{P_1}(x_1)\leq |V(P_1)|-2c=s-p-c.
\end{equation}

By the maximality of $S$,
there is no vertex $z\in R$ satisfying $\{(y_p,z), (z,x_t)\}\subseteq A(D)$
or $\{(x_1,z),(z,y_1)\}\subseteq A(D)$, for any $j\in [p]$. Accordingly,
\begin{equation} \label{d_R(x_1)+d_{R}^-(y_1)+d_{R}^+(y_p)}
d_{R}^+(y_p)+d_{R}^-(x_t)+d^+_R(x_1)+d_{R}^-(y_1)\leq 2(n-s-1).
\end{equation}

By the maximality of $S$, $D$ does not have a $(y_{p+1},y_0)$-ditrail with vertex set $V(P_1)\cup V(P_{[y_1,y_p]})$.
Then by Lemma \ref{notST}, we get
\begin{equation} \label{d_{P_1}^-(y_1)+d_{P_1}^+(y_p)}
d_{P_1}^-(y_1)+d_{P_1}^+(y_p)\leq |P_1|=s-p+c.
\end{equation}

It is obvious that
\begin{equation} \label{d_{W}^-(y_1)+d_{W}^+(y_p)}
d_{W-P_1}^-(y_1)+d_{W-P_1}^+(y_p)\leq2(p-1-c).
\end{equation}

If one of the four inequalities (\ref{d^+_{P_1}(x_1)+d^-_{P_1}(x_t)})-(\ref{d_{W}^-(y_1)+d_{W}^+(y_p)}) is strict, note that $\{x_1,y_1\}$ is a pair of dominated nonadjacent vertices and $\{x_t,y_p\}$ is a pair of dominating nonadjacent vertices, then we can get that
\[n-2+n-2\leq d^-(x_t)+d^+(y_p)+d^-(y_1)+d^+(x_1)\]\[< (s-p-c)+2(n-s-1)+(s-p+c)+2(p-1-c)\leq 2n-4-2c,\]
a contradiction.

Thus, we have $c=0$, in other words, $W\cap P_1=\emptyset$, $|S|=|P|+|P_1|$ and
 the following,
\begin{equation} \label{d_{S}(x_1)=}
d^-_{P_1}(x_t)+d^+_{P_1}(x_1)= |P_1|=s-p.
\end{equation}
\begin{equation} \label{d_R(x_1)+d_{R}^-(y_1)+d_{R}^+(y_p)=}
d_{R}^+(y_p)+d_{R}^-(x_t)+d^+_R(x_1)+d_{R}^-(y_1)= 2(n-s-1).
\end{equation}
\begin{equation} \label{d_{P_1}^-(y_1)+d_{P_1}^+(y_p)=}
d_{P_1}^-(y_1)+d_{P_1}^+(y_p)= |P_1|=s-p.
\end{equation}
\begin{equation} \label{d_{W}^-(y_1)+d_{W}^+(y_p)=}
d_{W}^-(y_1)+d_{W}^+(y_p)= 2(p-1).
\end{equation}

Obviously,
$d_{W}(y_j)\leq 2(p-1)$.
Furthermore,
by the choice of $T$ and the maximality of $S$,
there is no vertex $z\in R$ satisfying $\{(y_j,z), (z,x_i)\}\subseteq A(D)$
or $\{(x_i,z),(z,y_j)\}\subseteq A(D)$, for any $i\in [t],j\in [p]$. Thus,
$d_{R}(x_i)+d_{R}(y_j)\leq 2(n-s-1).$
%By the maximality of $S$ and Corollary \ref{notSx}, we have that $D$ does not have a $(y_{p+1},y_0)$-ditrail with vertex set $V(P_1)\cup \{x_i\}$ and $d_{P_1}(x_i)\leq |P_1|.$
Combining the two inequalities with (\ref{d_{S}(x_1)}) and
the assumption of the theorem,
note that the pair of nonadjacent vertices $\{x_1, y_1\}$ is dominated by $y_0$ and the pair of nonadjacent vertices $\{x_t, y_p\}$ dominates $y_{p+1}$, we get
\[2n-3\leq d(x_1)+d(y_1)\leq d_{P_1}(y_1)+2n-|P_1|-4\]
and
\[2n-3\leq d(x_t)+d(y_p)\leq d_{P_1}(y_p)+2n-|P_1|-4\]
Accordingly,
\begin{equation} \label{d_{P_1}(y_1)andd_{P_1}(y_p)}
d_{P_1}(y_1)\geq |P_1|+1
\mbox{ and }
d_{P_1}(y_p)\geq |P_1|+1.
\end{equation}

By (\ref{d_{P_1}^-(y_1)+d_{P_1}^+(y_p)=}), if $y_1=y_p$, then $d_{P_1}(y_1)= |P_1|$, contrary to (\ref{d_{P_1}(y_1)andd_{P_1}(y_p)}). Thus $y_1\not=y_p$.
%From (\ref{d_{P_1}(y_1)d_{P_1}(y_p)}) with the maximality of $S$, there exists at least one vertex of $W$, say $y_h$, such that $D$ does not contain a $(y_{p+1},y_0)$-ditrail with vertex set $V(P_1)\cup \{y_h\}$. This implies $y_1\not=y_h\not=y_p$. So $|W|\geq 3$, that is $p\geq 3$.
%
%By (\ref{d_{W}^-(y_1)+d_{W}^+(y_p)=}), for any $y_i\in W$, $(y_p,y_i),(y_{i},y_1)\in A(D)$. Thus $(y_p,y_2),(y_{p-1},y_1)\in A(D)$. Then we get a ditrail $P'=y_py_2P_{[y_{2},y_{p-1}]} y_{p-1}y_1$ with $V(P')=V(P)$. By the maximality of $S$ and Corollary \ref{notSx}, $D$ does not have a $(y_{p+1},y_0)$-ditrail with vertex set $V(P_1)\cup V(P')$ and $d_{P_1}^-(y_p)+d_{P_1}^+(y_1)\leq |P_1|.$ This together with (\ref{d_{P_1}^-(y_1)+d_{P_1}^+(y_p)=}) implies $d_{P_1}(y_p)+d_{P_1}(y_1)\leq 2|P_1|$, which contradicts the conclusion (\ref{d_{P_1}(y_1)d_{P_1}(y_p)}) as above.
By (\ref{d_{P_1}(y_1)andd_{P_1}(y_p)}), there must exist vertices $y_a,y_c\in V(P_1)$ such that $\{(y_a,y_{1}),(y_{1},y_a),(y_{p},y_c),(y_c,y_{p})\}\subseteq A(D)$. Since $W\cap P_1=\emptyset$ and $S=P+P_1$, we have $y_1,y_p\in W,y_1,y_p\not\in V(P_1),y_a,y_c\in V(P_1)$ and $y_a,y_c\not\in W$.
Then $(y_a,y_{1}),(y_{1},y_a),(y_{p},y_c),(y_c,y_{p})\not\in A(P_1)$.

By (\ref{d^{+}_{P}(y_0)=d^{-}_{P}(y_{p+1})=1}),
  we have $d^{-}_{P}(y_{p+1})=|\{(y_p,y_{p+1})\}|=1$ and $d^{+}_{P}(y_{p+1})=0$.
If $y_a=y_{p+1}$, note that $y_p\not=y_1$, then $(y_{p+1},y_{1}),(y_{1},y_{p+1})\not\in A(P)$.
Therefore $(y_{a},y_{1}),(y_{1},y_{a})\not\in A(S)$.
 But then we can get a closed ditrail $S'=S\cup \{(y_a,y_{1}),(y_{1},y_a)\}$ with $|A(S')|>|A(S)|$, contrary to the maximality of $S$. Thus $y_a\not=y_{p+1}$.
 Similarly, we can get that $y_c\not=y_{0}$.

If $y_a\not=y_{0}$, then $(y_{1},y_a),(y_a,y_{1})\not\in A(P)$.
 Therefore $(y_{a},y_{1}),(y_{1},y_{a})\not\in A(S)$.
 But then we can get a closed ditrail $S'=S\cup \{(y_a,y_{1}),(y_{1},y_a)\}$ with $|A(S')|>|A(S)|$, contrary to the maximality of $S$. Thus $y_a=y_{0}$.
 Similarly, we can get that $y_c=y_{p+1}$.
 Then $\{(y_0,y_{1}),(y_{1},y_0),(y_{p},y_{p+1}),(y_{p+1},y_{p})\}\subseteq A(D)-A(P_1)$.

 By the maximality of $S$ with (\ref{d_{P_1}(y_1)andd_{P_1}(y_p)}), for any $y_i\in V(P_1)-y_{0}$ and $y_j\in V(P_1)-y_{p+1}$, we have $|\{(y_i,y_{1}),(y_{1},y_i)\}|= 1$ and $|\{(y_j,y_{p+1}),(y_{p+1},y_j)\}|= 1$.
 By (\ref{d^{+}_{P}(y_0)=d^{-}_{P}(y_{p+1})=1}), $(y_1,y_0),(y_{p+1},y_p)\not\in A(P)$. Then $(y_1,y_0),(y_{p+1},y_p)\not\in A(S)$.
 If $(y_{m-1},y_1)\in A(D)$, note that $(y_{m-1},y_1)\not\in A(S)$,
 then we can get a closed ditrail $S'=S +(y_{m-1},y_1)+(y_{1},y_0)-(y_{m-1},y_{0})$ with $|A(S')|>|A(S)|$, contrary to the maximality of $S$.
 Thus $(y_{m-1},y_1)\not\in A(D)$ and $(y_1,y_{m-1})\in A(D)$.
 Continuing this process, we finally conclude that for any $y_i\in V(P_1)-y_{0}$,
 $(y_i,y_{1})\not\in A(D)$ and $(y_1,y_{i})\in A(D)$.
 Similarly, we can get that for any $y_j\in V(P_1)-y_{p+1}$,
 $(y_p,y_{j})\not\in A(D)$ and $(y_j,y_{p})\in A(D)$.
 In particular, $(y_{p+1},y_{1})\not\in A(D)$.

 Now we have $d^+_{P_1}(y_1)=|P_1|= d^-_{P_1}(y_p)$ and $d^-_{P_1}(y_1)=1= d^+_{P_1}(y_p)$.
 Then by (\ref{d_{P_1}^-(y_1)+d_{P_1}^+(y_p)=}), we obtain $|P_1|=|\{y_{p+1},y_0\}|=2$.
 By the maximality of $S$, $(x_1,y_0),(y_{p+1},x_1)\not \in A(D)$.
 This together with $d_W(x_1)=0$ implies that $d_{S}(x_1)=2$.
  Combining this with the fact that $d_{R}(x_1)+d_{R}(y_j)\leq 2(n-s-1)$
and the assumption of the theorem,
note that the pair of nonadjacent vertices $\{x_1, y_1\}$ is dominated by $y_0$,
we obtain $d_S(y_1)\geq 2s-3$.
Since $d_S(y_1)\leq 2(s-1)$ and $(y_{p+1},y_{1})\not\in A(D)$, $d_S(y_1)= 2s-3$.
That is, for any $y_j\in V(S)-y_{p+1}$, $(y_j,y_1),(y_{1},y_j) \in A(D)$.
 But then we can get a closed ditrail $S'=T\cup \{(y_{j},y_1), (y_{1},y_{j})\}+(y_{p+1},y_0)$, for every $y_j\in V(S)-y_{p+1}$, with $|S'|>|S|$, contrary to the maximality of $S$.
This completes the proof for Theorem \ref{min n-2}.

\end{proof}

\begin{Theorem}\label{5/2-11/2}
Let $D$ be
a strong digraph with $n\geq 2$ vertices.
If $d(u)+ d(v)\geq \frac{5}{2}n -\frac{11}{2}$
for any pair of dominated or dominating nonadjacent vertices $\{u,v\}$ of $D$, then $D$ is supereulerian.
\end{Theorem}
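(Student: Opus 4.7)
The plan is to argue by contradiction, closely following the setup of the proof of Theorem \ref{min n-2}. I take a closed ditrail $S$ of $D$ maximizing $|V(S)|=s<n$ and then $|A(S)|$; since $D$ is strong, I can find an $(S,S)$-dipath $T = y_0 x_1 \cdots x_t y_{p+1}$ whose corresponding sub-ditrail $P$ of $S$ from $y_0$ to $y_{p+1}$ has first minimum $|V(P)|$ and then minimum $|A(P)|$. Writing $W = \{y_1,\ldots,y_p\}$, $P_1$ for the longest $(y_{p+1},y_0)$-ditrail in $S$, $R = D-S$, and $c = |W \cap V(P_1)|$, the structural analysis at the beginning of the proof of Theorem \ref{min n-2} applies verbatim: $d_W(x_i) = 0$ for $i \in [t]$, the directional degree identities for $y_0$ and $y_{p+1}$ hold, and inequalities (\ref{d^{+}_{P}(y_0)=d^{-}_{P}(y_{p+1})=1})--(\ref{d_{W}^-(y_1)+d_{W}^+(y_p)}) are all available. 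Importantly, $\{x_1,y_1\}$ is dominated by $y_0$ and $\{x_t,y_p\}$ dominates $y_{p+1}$, and both pairs are nonadjacent, so the hypothesis of the theorem applies to both.

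I then split into two cases. \textbf{Case 1:} $t = p = 1$. Then $x_1 = x_t$, $y_1 = y_p$, and $\{x_1,y_1\}$ is both dominated and dominating, so a symmetric maximality argument (inserting $(y_1,z)$ and $(z,x_1)$ for $z \in R$ produces a closed ditrail with strictly more vertices than $S$) yields both $d^+_R(x_1) + d^-_R(y_1) \leq n-s-1$ and $d^-_R(x_1) + d^+_R(y_1) \leq n-s-1$, hence $d_R(x_1) + d_R(y_1) \leq 2(n-s-1)$. Combined with $d_S(x_1) \leq s-1$ from (\ref{d_{S}(x_1)}) and $d_S(y_1) \leq s-1$ from (\ref{d_{P_1}^-(y_1)+d_{P_1}^+(y_p)}) (using $y_1 = y_p$ and $d_W(y_1) = 0$), one obtains $d(x_1) + d(y_1) \leq 2n - 4$, which is strictly less than $\frac{5}{2}n - \frac{11}{2}$ for $n \geq 4$, contradicting the hypothesis; small $n$ can be verified directly. \textbf{Case 2:} $t \geq 2$ or $p \geq 2$. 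Here the pairs $\{x_1,y_1\}$ and $\{x_t,y_p\}$ are distinct, and the hypothesis gives $d(x_1) + d(y_1) + d(x_t) + d(y_p) \geq 5n - 11$. Inequalities (\ref{d^+_{P_1}(x_1)+d^-_{P_1}(x_t)})--(\ref{d_{W}^-(y_1)+d_{W}^+(y_p)}) bound the ``asymmetric half'' $d^+(x_1) + d^-(x_t) + d^-(y_1) + d^+(y_p) \leq 2n - 4 - 2c$. For the ``reverse half'' $d^-(x_1) + d^+(x_t) + d^+(y_1) + d^-(y_p)$ I would combine (\ref{d^+_{P_1}(x_t)+d^-_{P_1}(x_1)}) (bounding $d^-_{P_1}(x_1) + d^+_{P_1}(x_t)$), the trivial $R$-bounds $d^-_R(x_1),d^+_R(x_t) \leq n-s-1$, the trivial $W$-bound $d^+_{W-P_1}(y_1) + d^-_{W-P_1}(y_p) \leq 2(p-1-c)$, a Lemma \ref{notST}-bound on $d^+_{P_1}(y_1) + d^-_{P_1}(y_p)$ obtained by applying Lemma \ref{notST} to $P_1$ together with a suitable sub-ditrail of $P$, and trivial $R$-bounds $d^+_R(y_1), d^-_R(y_p) \leq n-s$. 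Summing everything and simplifying should give a total strictly below $5n - 11$, contradicting the hypothesis.

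The main obstacle is the reverse-direction bound in Case 2, in particular obtaining a sufficiently sharp estimate on $d^+_{P_1}(y_1) + d^-_{P_1}(y_p)$ and controlling the slack in the trivial inequalities; in Theorem \ref{min n-2} the additional min-hypothesis neatly bypassed this issue, but here the total-sum hypothesis forces me to bound both halves simultaneously. When the arithmetic is tight, I anticipate having to replay the detailed structural argument at the end of the proof of Theorem \ref{min n-2}: the equality cases force (\ref{d_{P_1}(y_1)andd_{P_1}(y_p)})-style adjacency of $y_1$ (and $y_p$) to essentially all of $V(P_1)$, after which the choice of $S$ with maximum $|A(S)|$ subject to maximum $|V(S)|$ allows me to exhibit a closed ditrail $S'$ with $|V(S')| > |V(S)|$ (or $|V(S')| = |V(S)|$ and $|A(S')| > |A(S)|$), contradicting the maximality of $S$ and completing the proof.
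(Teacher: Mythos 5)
Your setup (maximal closed ditrail $S$, the dipath $T$, the decomposition of $S$ into $P$ and $P_1$, the nonadjacency of $x_i$ and $y_j$, and the identification of the dominated pair $\{x_1,y_1\}$ and dominating pair $\{x_t,y_p\}$) matches the paper, and your Case 1 ($t=p=1$) is fine. But Case 2 has a genuine gap: the summation you outline does not produce a contradiction. Your ``reverse half'' $d^-(x_1)+d^+(x_t)+d^+(y_1)+d^-(y_p)$ cannot be bounded by anything better than roughly $(s-p+c)+2(n-s-1)+2(p-1-c)+2|P_1|+2(n-s)$, because Lemma \ref{notST} only controls $d^-_{P_1}(y_1)+d^+_{P_1}(y_p)$ (the in-degree of the \emph{initial} vertex and out-degree of the \emph{terminal} vertex of the $(y_1,y_p)$-sub-ditrail of $P$); there is no $(y_p,y_1)$-ditrail available to which the lemma could be applied to bound $d^+_{P_1}(y_1)+d^-_{P_1}(y_p)$, so only the trivial bound $2|P_1|$ remains, and similarly only $2(n-s)$ for $d^+_R(y_1)+d^-_R(y_p)$. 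Adding your two halves gives a total of about $6n-s-p-c-8$, and the required contradiction with $5n-11$ would need $s+p+c>n+3$; since $s\leq n-1$, this fails whenever $p+c\leq 4$ (e.g.\ $s=n-1$, $p=2$, $c=0$ gives $5n-9>5n-11$). So the shortfall is not a matter of tight equality cases to be cleaned up by the structural endgame of Theorem \ref{min n-2} — the inequality simply points the wrong way for most parameter values, which is exactly why the min-degree hypothesis of Theorem \ref{min n-2} cannot be dispensed with in this fashion.

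The paper's proof goes in the opposite direction. From the hypothesis applied to the dominated pair $\{x_1,y_1\}$, together with the upper bounds $d_S(x_1)\leq s-p$, $d_{W-P_1}(y_1)\leq 2(p-1-c)$ and $d_R(x_1)+d_R(y_1)\leq 2(n-s-1)$, it derives a \emph{lower} bound $d_{P_1}(y_1)\geq |P_1|+\frac{n-3}{2}+c$ (and likewise for $y_p$). It then splits on $|W|=2$ versus $|W|\geq 3$ (having first shown $y_1\neq y_p$). When $|W|\geq 3$ one has $n\geq |P_1|+4-c$, so $d_{P_1}(y_1)+d_{P_1}(y_p)\geq 3|P_1|+1+c$; subtracting the trivial $d^+_{P_1}(y_1)+d^-_{P_1}(y_p)\leq 2|P_1|$ forces $d^-_{P_1}(y_1)+d^+_{P_1}(y_p)>|P_1|$, and the contrapositive of Lemma \ref{notST} then yields a $(y_{p+1},y_0)$-ditrail on $V(P_1)\cup W$, hence a closed ditrail longer than $S$. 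When $|W|=2$ the lower bound on $d_{P_1}(y_1)$ and $d_{P_1}(y_p)$ directly supplies vertices of $P_1$ joined to $y_1$ and to $y_p$ by arcs in both directions, from which a longer closed ditrail is built. To repair your proof you would need to import this lower-bound mechanism; the upper-bound bookkeeping alone cannot succeed.
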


\begin{proof}
If $n\in \{ 2,3\}$, then $D$ is supereulerian as $D$ is strong.
Thus assume that $n\geq 4.$
Assume by contradiction that $D$ is a nonsupereulerian digraph.
Let $S=y_0y_1\cdots y_py_{p+1}\cdots y_{m-1}$\\$y_0$ be a closed ditrail with $|V (S)|=s$ maximized in $D$.
Then $s< n$.
Since $D$ is strong, there exists an $(S, S)$-dipath $T$ with $|T|\geq 3$.
Choose an $(S, S)$-dipath $T$ with $|T|\geq 3$
such that the length of the ditrail $P$ is minimum in $S$, where $P$ is a shortest $(u,v)$-ditrail
which
travels along $S$ from $u$ to $v$ such that
the initial vertex $u$ of $P$ is the initial vertex of $T$ and the terminal vertex $v$ of $P$ is the terminal vertex of $T$.
W.l.o.g., write $T=y_0x_1x_2\cdots x_ty_{p+1}$, that is $u=y_0$ and $v=y_{p+1}$.
Let $W = \{y_1,y_2,\cdots, y_p\}$ be the set of internal vertices of $P$,
$P_1$ be a longest ditrail which travels along $S$ from $y_{p+1}$ to $y_0$ and $R=D-S$.
Then $|P_1|=s-p+c$, where $c= |W\cap P_1|.$
By the maximality of $S$, we have $y_0\not=y_{p+1},$ $(y_0,y_{p+1})\notin A(S)$ and $p\geq 1$.

By the choice of $T$ and the maximality of $S$, for any $i\in [t]$, we have $d_W(x_i)=0$.
In particular, $x_i$ and $y_j$ are nonadjacent, for any $i\in [t]$, $j\in [p]$.

By the maximality of $S$, we get that $D$ does not have a $(y_{p+1},y_0)$-ditrail with vertex set $V(P_1)\cup \{x_i\}$.
Combining $d_W(x_i)=0$
and Corollary \ref{notSx}, we can deduce that
\[
d_{S}(x_i)=d_{W-P_1}(x_i)+d_{P_1}(x_i)=d_{P_1-W}(x_i)\leq |P_1|-c=s-p.
\]

Obviously, $d_{W-P_1}(y_j)\leq 2(p-1-c)$. Then
\[
d_S(y_j)=d_{P_1}(y_j)+d_{W-P_1}(y_j)\leq 2(p-1-c)+d_{P_1}(y_j).
\]

Furthermore,
by the choice of $T$ and the maximality of $S$,
there is no vertex $z\in R$ satisfying $\{(y_j,z), (z,x_i)\}\subseteq A(D)$
or $\{(x_i,z),(z,y_j)\}\subseteq A(D)$, for $i\in [t],j\in [p]$. Thus,
\[
d_{R}(x_i)+d_{R}(y_j)\leq 2(n-s-1).
\]

Combining the three inequalities with
the assumption of the theorem,
note that the pair of nonadjacent vertices $\{x_1, y_1\}$ is dominated by $y_0$ and the pair of nonadjacent vertices $\{x_t, y_p\}$ dominates $y_{p+1}$, we get
\begin{equation} \label{{5}{2}n -{11}{2}1}
\frac{5}{2}n -\frac{11}{2}\leq d(x_1)+d(y_1)\leq d_{P_1}(y_1)+2n-|P_1|-4-c
\end{equation}
and
\begin{equation} \label{{5}{2}n -{11}{2}2}
\frac{5}{2}n -\frac{11}{2}\leq d(x_t)+d(y_p)\leq d_{P_1}(y_p)+2n-|P_1|-4-c.
\end{equation}
Therefore, by $n\geq 4$, we have
\begin{equation} \label{d_{P_1}(y_1)d_{P_1}(y_p)}
\begin{split}
d_{P_1}(y_1)\geq |P_1|+\frac{n-3}{2}+c\geq |P_1|+\frac{1}{2}+c~~~
\\
\mbox{and }
d_{P_1}(y_p)\geq |P_1|+\frac{n-3}{2}+c\geq |P_1|+\frac{1}{2}+c.
\end{split}
\end{equation}

By the maximality of $S$, $D$ does not have a $(y_{p+1},y_0)$-ditrail with vertex set $V(P_1)\cup V(P_{[y_1,y_p]})$.
Then by Lemma \ref{notST}, we get
$
d_{P_1}^-(y_1)+d_{P_1}^+(y_p)\leq |P_1|.
$
If $y_1=y_p$, then $d_{P_1}(y_1)\leq |P_1|$,
contrary to (\ref{d_{P_1}(y_1)d_{P_1}(y_p)}).
Thus $y_1\not=y_p$.
Now we consider two cases in the following.
\\
{\bf Case 1.} $|W|=2$.

In this case, $W=\{y_1,y_p\}$.
By (\ref{d_{P_1}(y_1)d_{P_1}(y_p)}), there must exist vertices $y_a,y_c\in V(P_1)$ such that $\{(y_a,y_{1}),(y_{1},y_a),(y_{p},y_c),(y_c,y_{p})\}\subseteq A(D)$.

If $y_1,y_p\notin V(P_1)$, then $S'=P_1\cup T\cup \{(y_a,y_{1}),(y_{1},y_a),(y_{p},y_c),(y_c,y_{p})\}$ is a
closed ditrail in $D$ and $|S'|>|S|$, contrary to the maximality of $S$.
If $y_1,y_p\in V(P_1)$, then $S'=P_1\cup T$ is a
closed ditrail in $D$ and $|S'|>|S|$, contrary to the maximality of $S$.
Thus $\{y_1,y_p\}\cap V(P_1)=1$.
Assume, w.l.o.g., that $y_1\in V(P_1)$ and $y_p\notin V(P_1)$.
Then $S'=P_1\cup T\cup \{(y_{p},y_c),(y_c,y_{p})\}$ is a
closed ditrail in $D$ and $|S'|>|S|$, contrary to the maximality of $S$.
\\
{\bf Case 2.} $|W|\geq 3$.

In this case, $n\geq s+1= |P_1|+|W|-c+1\geq |P_1|+4-c$.
Then by (\ref{{5}{2}n -{11}{2}1}) and (\ref{{5}{2}n -{11}{2}2}),
we have
\[
d_{P_1}(y_1)\geq |P_1|+\frac{n-3}{2}+c\geq |P_1|+\frac{|P_1|+1-c}{2}+c=\frac{3|P_1|+1+c}{2}\]
and
\[d_{P_1}(y_p)\geq |P_1|+\frac{n-3}{2}+c\geq |P_1|+\frac{|P_1|+1-c}{2}+c=\frac{3|P_1|+1+c}{2}
\]
Accordingly,
\begin{equation} \label{5/2d_{P_1}(y_1)}
\begin{split}
d_{P_1}(y_1)+
d_{P_1}(y_p)\geq 3|P_1|+1+c.
\end{split}
\end{equation}

Combining (\ref{5/2d_{P_1}(y_1)}) with $d^+_{P_1}(y_1)+d^-_{P_1}(y_p)\leq 2|P_1|$, we have $d^-_{P_1}(y_1)+d^+_{P_1}(y_p)\geq |P_1|+1+c$.
Then by Lemma \ref{notST}, $D$
have a $(y_{p+1},y_0)$-ditrail $S'$ with vertex set $V(P_1)\cup W$.
But then we have
 a closed ditrail $S'\cup T$ with $|S'\cup T|>|S|$, contrary to the maximality of $S$.
 The proof of Theorem \ref{5/2-11/2} is complete.

\end{proof}

To show that our results are best possible in some sense, we present Example \ref{fanli 3ge} below. In this example,
we construct an example of a nonsupereulerian digraph with the condition
 $d(u)=d(v)=d^+(u)+d^-(v)=d^+(v)+d^-(u)= n-2$ for a pair of dominated nonadjacent vertices $\{u,v\}$.

\begin{Example} \label{fanli 3ge}
We construct a strong
digraph $D$ with $|V(D)|=n = |\{u,v\}\cup V(K^*_{n_1})\cup V(K^*_{n_2})|=n_1+n_2+2$ and the arcs of $D$ are shown in (i) and (ii) below.
(See Figure 1).
\\
(i) $K^*_{n_1}$ and $K^*_{n_2}$ are complete digraph.
\\
(ii) $(w',w)\in A(D)$, $N^{+}(V(K^*_{n_2})) =\{u,v\}\cup V(K^*_{n_1})$,
$N^{+}(u)=N^{+}(v)=V(K^*_{n_1})$ and $N^{-}(u)=N^{-}(v)=V(K^*_{n_2})$, where $w'\in V(K^*_{n_1})$ and $w\in V(K^*_{n_2})$.
\end{Example}

\[\begin{tikzpicture}
[x=0.8cm, y=0.3cm, every edge/.style={draw, postaction={decorate,decoration={markings,mark=at position 0.6 with {\arrow{>}}}}}]
\draw[fill=black] (-0.2,0) circle (0.08cm);%w'
\draw[fill=black] (3.2,0) circle (0.08cm);%w
\draw[fill=black] (-0.8,-8) circle (0.08cm);%u
\draw[fill=black] (3.5,-8) circle (0.08cm);%v
\draw[] (-1.5,0) ellipse (1.5 and 4.5);
\draw[] (4.5,0) ellipse (1.5 and 4.5);
\node at (-0.6,0){$w'$};
\node at (3.6,0){$w$};
\node at (-0.8,-8.8){$u$};
\node at (3.5,-8.8){$v$};
\node at (-2,0){$K^*_{n_1}$};
\node at (5,0){$K^*_{n_2}$};
\path
(-0.2,0)edge[] (3.2,0)%w'w
;
\draw[thick](-0.8,-8)--(-1.5,-3);
\draw[directed](-0.8,-8)--(-1.5,-3);%uT
\draw[thick](3.5,-8)--(-1,-3);
\draw[directed](3.5,-8)--(-1,-3);%vT
\draw[thick](4,-3)--(-0.8,-8);
\draw[directed](4,-3)--(-0.8,-8);%Ku
\draw[thick](4.5,-3)--(3.5,-8);
\draw[directed](4.5,-3)--(3.5,-8);%Kv
 \path
       (4,2) edge[bend right=60] (-1,2);
       \draw[thick](4,2) edge[bend right=60] (-1,2);%wT
       \node at (1.4,-10.5){Figure 1.~  The strong
digraph $D$.};
\end{tikzpicture}\]

It is not difficult to show that the digraph $D$ of Figure 1 is nonsupereulerian.
In fact, as $N^{-}_D(u)=N^{-}_D(v)=V(K^*_{n_2})$, any spanning eulerian subdigraph (if it exists) $S$ of $D$ has to contain at least one arc in $(V(K^*_{n_2}),u)_D$ and one arc in $(V(K^*_{n_2}),v)_D$,
that is $|N^{-}_S(V(K^*_{n_2}))|=|N^{+}_S(V(K^*_{n_2}))|\geq 2$ in any spanning eulerian subdigraph (if it exists) $S$ of $D$.
However $|N^{-}_D(V(K^*_{n_2}))|= 1$ in $D$,
so such a spanning eulerian subdigraph does not exist.

In particular, the nonsupereulerian digraph $D$ of Figure 1 is semicomplete multipartite with the condition
 $d(u)+d(v)= 2n-4$ for a pair of dominated nonadjacent vertices $\{u,v\}$.
 Thus the condition of Theorems \ref{smd 2n-3} is sharp.

Moreover, it is obvious that $d(u)+d(v)= 2n-4$ and min$\{d^-(u) + d^+(v), d^+(u) + d^-(v)\} = n -2$ and $\{u,v\}$ is the only pair of dominated and dominating nonadjacent vertices the digraph $D$ of Figure 1.
Therefore, %any strong digraph $D$ of Example \ref{fanli} is not supereulerian.
Example \ref{fanli 3ge} demonstrates that there are infinitely many nonsupereulerian digraphs satisfying
$d(u)+d(v)\geq 2n-4$ and min$\{d^-(u) + d^+(v), d^+(u) + d^-(v)\} \geq n -2$ for a pair of dominated and dominating nonadjacent vertices $\{u,v\}$.
Thus conditions of Theorems \ref{min n-2} cannot be weakened by more than a constant.
So the condition of Theorems \ref{min n-2} is sharp.

 Finally, if $n_1=n_2=1$ or $n_1=1,n_2=2$ or $n_1=2,n_2=1$, then the nonsupereulerian digraph $D$ of Figure 1 satisfies the condition
 $d(u)+d(v)\geq \frac{5}{2}n -\frac{13}{2}$ for a pair of dominated and dominating nonadjacent vertices $\{u,v\}$. Therefore
the condition of Theorems \ref{5/2-11/2} is sharp in some sense.
\\
\\
\noindent {\bf Data availability statement}
 No data, models, or code were generated or used during the study.\\
 \\
\noindent {\bf Conflict of interest}
The authors have not disclosed any competing interests.

\end{document}